\newtheorem{theorem}{Theorem}[section]
\newtheorem{lemma}[theorem]{Lemma}
\newdimen\epsfxsize
\newcommand {\gap}     {\makebox[0.075 in]{}}
\newcommand{\cons}[1] {\left<{#1}\right>}
\newcommand{\R}{\mathbb{R}}
\newcommand{\ER} {1 \times \R^d}
\newcommand {\set}[1]  {\left\{ {#1} \right\}}
\newcommand{\Wdg} {\omega}
\newcommand{\PWdg}{\tilde{\omega}}
\begin{document}

\title{Two observations on the perturbed wedge}
\author{Fred B. Holt}
\address{5520 - 31st Ave NE; Seattle, WA 98105; 
{\rm fbholt@earthlink.net}}

\date{DRAFT 23 Feb 2013 - posted ArXiv 23 May 2013}

\begin{abstract}
Francisco Santos has described a new construction, perturbing apart a non-simple face, to offer
a counterexample to the Hirsch Conjecture.  We offer two observations about this {\em perturbed wedge} construction, regarding its effect on edge-paths.  First, that an all-but-simple spindle of dimension $d$ and length $d+1$ is a counterexample to the nonrevisiting conjecture.  Second, that there are conditions under which the perturbed wedge construction does not increase the diameter.

\vskip .0325in

NOTE:  These are simply working notes, offering two observations on the construction identified
by Santos.
\end{abstract}


\maketitle


\section{Introduction}
Coincidence in high dimensions is a delicate issue.  Santos has brought forward the
perturbed wedge construction \cite{Paco}, to produce a counterexample to
 the Hirsch conjecture.  We start in dimension $5$ with a non-simple 
 counterexample to the nonrevisiting conjecture.  If this counterexample were simple, 
 then repeated wedging would produce a corresponding counterexample to the 
 Hirsch conjecture.  Since our counterexample to the nonrevisiting conjecture is
 not simple, we need an alternate method to produce the corresponding counterexample to
 the Hirsch conjecture, and the perturbed wedge provides this method.

A $d$-dimensional spindle $(P,x,y)$ is a polytope with two distinguished 
vertices $x$ and $y$ such that every facet of $P$ is incident to either $x$ or $y$.
The {\em length} of the spindle $(P,x,y)$ is the distance $\delta_P(x,y)$.
The spindle $(P,x,y)$ is {\em all-but-simple} if every vertex of $P$ other than $x$ and $y$ is 
a simple vertex.

Our first observation is that a $d$-dimensional all-but-simple spindle $(P,x,y)$ 
of length $d+1$ is a counterexample to the nonrevisiting conjecture.

Let $P$ be a $d$-dimensional polytope with $n$ facets.  Let $y$ be a nonsimple vertex of $P$ incident to a facet $G$, and let $F$ be a facet not incident to $y$.  The {\em perturbed wedge}
$\tilde{\omega}_{F,G}P$ of $P$ is
a wedge of $P$ with foot $F$, which is a $(d+1)$-dimensional polytope with $n+1$ facets,
followed by a perturbation of the image of the facet $G$ in the coordinate for the new 
dimension.

Our second observation is that for a spindle $(P,x,y)$, with a facet $F$ incident to $x$ and a facet
$G$ incident to $y$, if there is a nonsimple edge in $G$ from $y$ to a vertex in $F$, then a short path from $x$ to $y$ along this edge is not increased in length in $\tilde{\omega}_{F,G} P$.

\section{A counterexample to the nonrevisiting conjecture}

\begin{lemma} Let $(P,x,y)$ be a $d$-dimensional all-but-simple spindle of length $d+1$.
Then every path from $x$ to $y$ revisits at least one facet.
\end{lemma}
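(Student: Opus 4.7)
The plan is to derive a contradiction by counting, in two different ways, the facet ``losses'' along a hypothetical nonrevisiting path from $x$ to $y$. Let $F_x$ and $F_y$ be the sets of facets of $P$ incident to $x$ and to $y$, and set $n_x = |F_x|$, $n_y = |F_y|$; then $n_x, n_y \geq d$, and the spindle hypothesis says every facet of $P$ lies in $F_x \cup F_y$. Fix any path $x = v_0, v_1, \ldots, v_k = y$; since $\delta_P(x,y) = d+1$ we have $k \geq d+1$, and by the all-but-simple hypothesis each intermediate $v_i$ is simple and so incident to exactly $d$ facets.

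The first step is an edge-by-edge local count, using the standard fact that at a simple vertex of a $d$-polytope every incident edge lies in exactly $d-1$ of the $d$ facets at that vertex. Applied at $v_1$, this gives $n_x - (d-1)$ facets lost and exactly $1$ facet newly entered on the step $v_0 \to v_1$; by symmetry the step $v_{k-1} \to v_k$ loses $1$ and gains $n_y - (d-1)$; and each of the $k-2$ interior simple-to-simple steps loses and gains exactly one facet. Summing over all edges, the total number of facet-losses along the path is $n_x + k - d$.

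The second step is to recompute this total using the nonrevisiting assumption together with the spindle hypothesis. Nonrevisiting forces each facet to be lost at most once, so the total equals the number of distinct facets that appear on the path but are absent at $y$. The spindle hypothesis excludes any facet outside $F_x \cup F_y$, and any facet of $F_y$ that first appears at an intermediate vertex must (by nonrevisiting) persist through $y$ and so is never lost; hence the distinct losses are exactly the facets in $F_x \setminus F_y$, whose number is $n_x - |F_x \cap F_y|$. Equating the two counts yields $k = d - |F_x \cap F_y| \leq d$, contradicting $k \geq d+1$.

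The main obstacle I anticipate is the second count: identifying the distinct lost facets as precisely $F_x \setminus F_y$ is where both hypotheses of the lemma enter simultaneously. The spindle condition keeps ``external'' facets out of the picture, and all-but-simplicity makes the local facet arithmetic at each intermediate step clean and uniform. Once these are in place, the arithmetic collapse to $k \leq d$ is automatic, and since any path has length at least $d+1$ we conclude that no such path can be nonrevisiting.
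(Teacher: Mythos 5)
Your proof is correct, and it lives in the same circle of ideas as the paper's: both arguments count facet departures and arrivals along a hypothetical nonrevisiting path, both use all-but-simplicity to get the clean ``lose one, gain one'' bookkeeping at each interior step (the paper's incidence table), and both use nonrevisiting plus the spindle dichotomy to forbid ever losing a facet through $y$ and to force every loss to come from $F_x$, each at most once. Where you genuinely differ is in how the contradiction is closed. The paper tracks a step-by-step invariant -- the $j$-th intermediate vertex lies on exactly $d-j$ facets through $x$ -- and concludes that the $d$-th intermediate vertex lies only on facets through $y$ and hence ``must already be'' $y$; that last identification (a simple vertex all of whose facets contain $y$ must equal $y$, since the $d$ facets at a simple vertex meet only in that vertex) is asserted rather than argued. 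Your global double count $n_x+k-d=n_x-|F_x\cap F_y|$ sidesteps that geometric step entirely and reaches a purely arithmetic contradiction $k\le d$, and in fact gives the slightly sharper conclusion that a nonrevisiting path with simple interior vertices would have length exactly $d-|F_x\cap F_y|$; the paper's version, in exchange, makes visible exactly which facet is left and which is entered at every step. Two small points: when you argue that no facet of $F_y$ is ever lost, state it for all of $F_y$ (including $F_x\cap F_y$, whose members first appear at $x$), since the identical one-line nonrevisiting argument covers that case and it is needed for the equality you write; and, like the paper, you tacitly assume the intermediate vertices are distinct from $x$ and $y$ so that all-but-simplicity applies -- harmless, because a path that returns to $x$ or $y$ revisits a facet outright, but worth a sentence.
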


\begin{proof}
Let $X$ be the $n_1$ facets incident to $x$, and let $Y$ be the $n_2$ facets incident to $y$.
Let $\rho = [x,u_1,\ldots,u_{k-1},y]$ be a path from $x$ to $y$ of length $k > d$, with all of the
$u_i$ being simple vertices.

Then each $u_i$ is incident to $d$ facets.  $u_1$ is incident to $d-1$ facets in $X$ and one
facet in $Y$, and $u_k$ is incident to one facet in $X$ and $d-1$ facets in $Y$.  The incidence
table for $\rho$ looks like the following:
$$
\begin{array}{llcccccc|cccccc}
 & \multicolumn{6}{c}{X} & \multicolumn{6}{c}{Y}  \\ \hline
x: & n_1 \; {\rm vertices} & 1 & 1 & \cdots & 1 & \cdots  & 1 & & & & & &  \\
u_1: & d \; {\rm vertices}  &  &  &  & 1 & \cdots & 1 & 1 & & & & &  \\
  & &  & &  &  & \cdots &  & & \cdots & & & &  \\
u_{k-1}: &  d  \; {\rm vertices}  & &  &  &  &  & 1 & 1 & \cdots & 1 &  &  &   \\
y: &  n_2 \; {\rm vertices}  & &  &  &  &  &  & 1 & \cdots & 1 & \cdots & 1 & 1  \\ \hline
\end{array}
$$

Consider the facet-departures and facet-arrivals from $u_1$ to $u_k$ (the simple part of the path).  
If any of the arrivals were back
to a facet in $X$, this would be a revisit since these facets were all incident 
to the starting vertex $x$.  So the arrivals must all be in $Y$.  

Similarly, all of the departures must be from $X$.  Any departure from a facet in $Y$ would
create a revisit since all the facets in $Y$ are incident to the final vertex $y$.

There are too many arrivals and departures to prevent a revisit.
The vertex $u_1$ is incident to only  $d-1$ facets in $X$, and since each departure leaves
a facet of $X$, $u_j$ is incident to $d-j$ facets in $X$.  So $u_d$ has completely departed
from $X$.  Since $k > d$, $u_d$ occurs among the vertices $u_1, \ldots, u_{k-1}$, but since
$u_d$ is incident only to facets in $Y$, $u_d$ must already be the vertex $y$, and $\rho$ would 
have length at most $d$.
\end{proof}

Santos \cite{Paco} has produced all-but-simple spindles in dimension $5$ of length $6$. 
Currently the smallest example has 25 facets.
So in dimension $5$ with $25$ facets, we have a counterexample to the nonrevisiting conjecture, with length well below the Hirsch bound. Since the spindle is not simple, at $x$ and $y$, 
the usual way of generating the corresponding counterexample \cite{KW, Hnr} to the Hirsch conjecture (through repeated wedging) does not directly apply, and we need an alternate construction.  The perturbed wedge accomplishes this.

\section{The perturbed wedge}
The perturbed wedge is constructed in two steps, first as a wedge over a facet, followed
by a perturbation of a facet incident to a nonsimple vertex of the wedge.

Let $P$ be a $d$-dimensional polytope with $n$ facets and $m$ vertices, and let $F=F(u)$ be 
a facet incident to $x$ in $P$.
The wedge $W = \omega_F(P)$ is a $(d+1)$-dimensional polytope with $n+1$ facets
and $2m - f_0(F)$ vertices.
The wedge $\Wdg_F P$ over $F$ in $P$, corresponds to the two-point
suspension over $u$ in $P^*$.

\begin{figure}[tb] 
\centering
\includegraphics[width=4.5in]{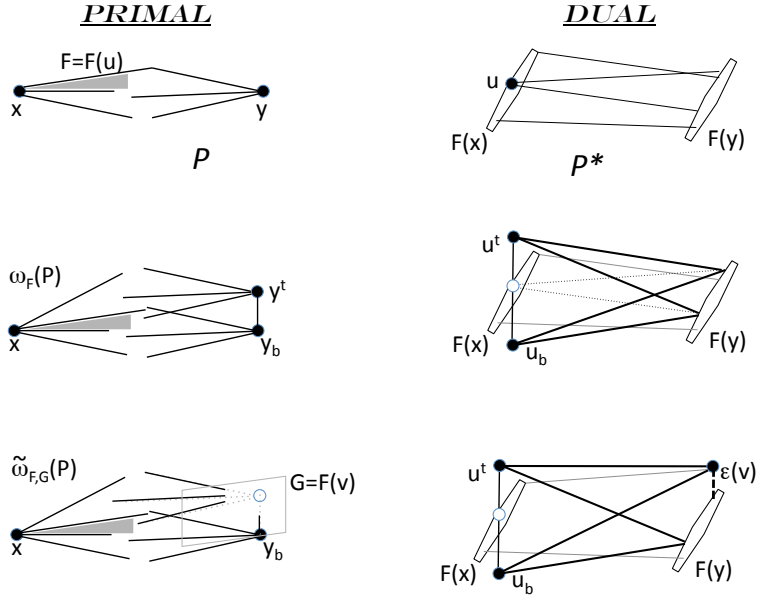}
\caption{\label{SantosWedgeFig} This figure illustrates the Santos perturbed wedge 
construction and its dual construction.  In the primal setting, we first perform a wedge
of $P$ over a facet $F$ which is incident to $x$, followed by a vertical perturbation of
a facet $G$ incident to $y$.}
\end{figure}

We now perturb a facet $G=F(v)$ in $W$ incident to the edge $[y^t, y_b]$.  This is already
interesting; we don't encounter non-simple edges until dimension $4$.
The perturbation of $G$ is accomplished by introducing a small vertical displacement
to $v$; that is, instead of the last coordinate of the outward-pointing normal vector to $G$ 
being $0$, we perturb this coordinate to $\epsilon > 0$.

\subsection{Embedded construction}
As a canonical embedding for polytopes, we consider the vertices of $P$
to be embedded in $\ER$, with $0$ in the interior of $P$.  For the facets
of $P$ we take their outward-pointing normals.  Since $0$ is interior to $P$,
we can assume that the first coordinate of each normal is $-1$.  $P$ is
given by the embedding:
$$ H^T_{n \times (d+1)} V_{(d+1) \times m}  \le \cons{0},$$
with
$$ \left[ -1 \gap h_i^T \right] \cdot \left[ \begin{array}{c} 1 \\ x_j \end{array} \right] = 0$$
iff vertex $j$ is incident to facet (hyperplane) $i$.
The facet-vertex incidence matrix for $P$ is given by the $\{0,1\}$-matrix
$$M_{n \times m}(P) = {\rm IsZero} \left( H^T V \right).$$

As a $d$-dimensional polytope, each vertex of $P$ is incident to at least
$d$ facets, and each $k$-face of $P$ is incident to at least $k+1$ vertices.
A simple vertex is incident to exactly $d$ facets.

For a $k$-face of $P$, each incident facet contributes either to the affine space
supporting this face or to its boundary \cite{Hblend}.
The space supporting this $k$-face is the intersection of at least $d-k$ facets of $P$,
and we say that the space is {\it simple} iff this space is given by the
coincident intersection of exactly $d-k$ facets of $P$. 
For $k > 0$, the boundary of the $k$-face is created by the various intersections of at least $k+1$
other facets with the supporting space of the face.
A face is simple iff its space is simple and all of its boundary elements are simple.
A face can be nonsimple in a variety of ways or in multiple ways, through the nonsimplicity
of its space or of its various boundary elements.

Let $F$ be represented by the first outward-pointing normal in $H^T$, and $G$ by the
last outward-pointing normal.  Then $H^T(\Wdg_F P)$ is given canonically by
$$ H^T(\Wdg_F P) = \left[  \begin{array}{ccc}
-1 & h_1^T & 1 \\
-1 & h_1^T & -1 \\
-1 & h_2^T & 0 \\
\vdots & \vdots & \vdots \\
-1 & h_n^T & 0 \end{array} \right].$$
The facet $F$ is replaced by two facets, the top and the base of the wedge.
The top has final coordinate $1$, and the base $-1$.
Every other facet is replaced by a single {\it vertical} facet, meaning that the last
coordinate (the new coordinate) is $0$.

The vertices of $\Wdg_F P$ are given as follows.  Rearrange the columns of $V$
so that the vertices incident to $F$ are given in the first block $V_F$ and
the rest of the vertices occur in a second block $V_{-}$ 
(denoted this way since $[-1 \; h_1^T]\cdot V_{-} < \cons{0}$).
\begin{eqnarray*}
V(P)  & =  & \left[ \begin{array}{cc} V_F & V_{-} \end{array} \right]. \\
  {\rm and} & &  \\
 V(\Wdg_F P) & = & \left[ \begin{array}{ccc}
 V_F & V_{-} & V_{-} \\
 \cons{0} & -[-1 \; h_1^T]\cdot V_{-} & [-1 \; h_1^T]\cdot V_{-}
 \end{array} \right].
 \end{eqnarray*}
 In $V(\Wdg_F P)$, the vertices are now embedded in $1 \times \R^{d+1}$.
 The last coordinate for vertices in the foot $F$ is $0$, and for vertices not in the
 foot, there are two images, one in the top and one in the base.

Let the facet $G$ have outward-pointing normal $[-1 \; h_n^T \; 0]$.  We perturb
the last coordinate to $\epsilon > 0$ to complete the construction of the perturbed
wedge.
$$ H^T(\PWdg_{F,G} P) = \left[  \begin{array}{ccc}
-1 & h_1^T & 1 \\
-1 & h_1^T & -1 \\
-1 & h_2^T & 0 \\
\vdots & \vdots & \vdots \\
-1 & h_{n-1}^T & 0 \\
-1 & h_n^T & \epsilon \end{array} \right].$$
Denote the outward-pointing normal for $\tilde{G}$ by 
$h_{\tilde{G}}^T = [-1 \; h_n^T \; \epsilon].$

To understand the effect of perturbing the facet $G$, we consider both $G$ and its
perturbed image $\tilde{G}$.  While we were able to write down the vertices of the
wedge $\Wdg_F P$ explicitly, the effect of the perturbation is more complicated.
The vertices incident to the facet $G$ are of three types:
\begin{itemize}
\item[Foot:] $v \in G \cap T \cap B$ (or $v \in G \cap F$).  For these vertices the last coordinate
is $0$, so $h_{\tilde{G}}^T v = 0$, and these vertices remain after the perturbation.

\vskip .0625in

\item[Top:] $v \in G \cap T  \backslash B$.  For these vertices, the last coordinate is positive,
so $h_{\tilde{G}}^T v > 0$.  If $v$ consists combinatorially of a single edge terminated by
$G$ -- the case when $v$ is a simple vertex but also when $v$ consists of a single 
nonsimple edge terminated by $G$ -- then the vertex $v$ is perturbed back along this
edge.  If $v$ consists combinatorially of more than one edge being terminated by $G$,
then $v$ is truncated away by the perturbation, and $\tilde{G}$ introduces vertices along
all of the edges incident to $v$ but not lying in $G$.

\vskip .0625in

\item[Base:] $v \in G \cap B  \backslash T$.  For these vertices, the last coordinate is negative,
so $h_{\tilde{G}}^T v < 0$.  If $v$ consists combinatorially of a single edge terminated by
$G$ -- the case when $v$ is a simple vertex but also when $v$ consists of a single 
nonsimple edge terminated by $G$ -- then the vertex $v$ is perturbed out along this
edge.  If $v$ consists combinatorially of more than one edge being terminated by $G$,
then $v$ remains as a vertex of $\PWdg_{F,G}P$,
and the perturbation reveals new edges emanating from $v$ and terminating in 
$\tilde{G}$ in new vertices.
\end{itemize}

We now consider the effect of the perturbed wedge on vertex $y$ and its natural images.

Under the wedge, $y$ has two natural images $y^t$ and $y_b$, in the top and base respectively.
Since $y$ is a nonsimple vertex, $y^t$ and $y_b$ are nonsimple vertices, and the edge
$[y_b,y^t]$ between them is a nonsimple edge.  The facet $G$ is one of the facets supporting 
the space of this edge.

When $G$ is perturbed to $\tilde{G}$, $y_b$ is preserved as a vertex, $y^t$ is truncated away,
and $\tilde{G}$ terminates the vertical edge at a new vertex $y_0$ whose last coordinate is $0$.
$\tilde{G}$ introduces new vertices along the edges of $\Wdg_F P$ incident to $y^t$ but not
lying in $G$.  $\tilde{G}$ also introduces new vertices along the new edges emanating from $y_b$
as revealed by $\tilde{G}$.  That is, the collection of facets $Y$ and the facet $B$ intersect in
edges that had lain beyond the facet $G$.  $\tilde{G}$ now introduces these edges as part of the
boundary of $\PWdg_{F,G}P$ and terminates them in new vertices.

In considering the implications of the perturbed wedge construction on the Hirsch conjecture,
we are interested in short paths from $x$ to $y$ in $P$ and their tight natural images
from $x$ to $y_0$ in $\PWdg_{F,G}P$.

{\em Claim:} The perturbed wedge does not introduce revisits on tight natural images of short paths.  
The detailed study of this claim is beyond the scope of this note.

For the construction of the counterexample to the Hirsch conjecture, the revisits already exist 
in the initial $5$-dimensional spindle.  We see below that as a general construction, the perturbed wedge does not always increase the length of the input polytope by $1$.  
However, the repeated application of the perturbed wedge to an all-but-simple spindle avoids the conditions of the following lemma.

\begin{figure}[tb] 
\centering
\includegraphics[width=4.5in]{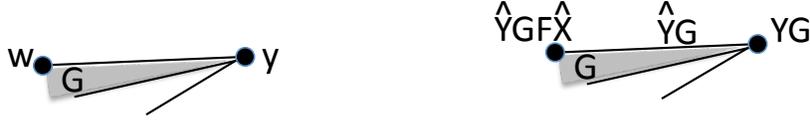}
\caption{\label{FacetFig} Consider the facet incidences in the circumstance that
$y$ has a neighbor $w$ along an edge of $G$ that terminates in the facet $F$, which
will be the foot of the wedge.  Denote the facet incidences at the nonsimple vertex $y$ 
as $YG$, in which $Y$ is a set of at least $d$ facets.  The edge from $y$ to $w$ is the
coincidence of $\hat{Y}G$ in which at least one facet of $Y$ is omitted from $\hat{Y}$.
The vertex $w$ is coincident with the facets $\hat{Y}G$ and $F$ and perhaps additional
facets $\hat{X}$.  $\hat{X}$ may be empty.}
\end{figure}

\begin{lemma}  Let $y$ be a nonsimple vertex of a $d$-dimensional polytope $P$.  Let $F$
be a facet of $P$ not incident to $y$, and let $G$ be a facet incident to $y$.
 If there is a nonsimple edge in $G$ from $y$ to a vertex $w$ in $F \cap G$, then this edge 
 remains after the perturbation.
\end{lemma}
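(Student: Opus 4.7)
My plan is to track the base-side image $[y_b,w]$ of the nonsimple edge $[y,w]$ through the two stages of the perturbed wedge and show that it persists, using the fact that a nonsimple edge is supported by more facets than are needed to determine its supporting line.

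Following the labeling in Figure~\ref{FacetFig}, I would write the facet incidences at $y$ in $P$ as $Y\cup\{G\}$ with $|Y|\ge d$, and the edge $[y,w]$ as the coincidence of $\hat Y\cup\{G\}$ with $\hat Y\subsetneq Y$; nonsimplicity of the edge forces $|\hat Y|\ge d-1$. In $\Wdg_F P$, $y$ splits into $y^t$ (top) and $y_b$ (base), $w$ stays in the foot with last coordinate $0$, and $[y,w]$ lifts to a triangular $2$-face with edges $[y^t,w]$, $[y_b,w]$, and the vertical edge $[y^t,y_b]$. The edge $[y_b,w]$ is supported by $\hat Y\cup\{G,B\}$, which has at least $d+1$ members in dimension $d+1$; hence the line $\ell=\bigcap(\hat Y\cup\{B\})$ supporting the edge is already $1$-dimensional without $G$.

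Next I would follow each object through the perturbation $G\mapsto\tilde G$ with normal $h_{\tilde G}^T=[-1\;h_n^T\;\epsilon]$. Since $w$ has last coordinate $0$, $h_{\tilde G}^T w=0$ and $w$ keeps all its incidences. Writing $\lambda<0$ for the last coordinate of $y_b$, a one-line computation using $y\in G$ gives $h_{\tilde G}^T y_b=(-1+h_n^T y)+\epsilon\lambda=\epsilon\lambda<0$; so $y_b$ drops off $\tilde G$ but remains incident to the at least $d+1$ facets $Y\cup\{B\}$ and survives as a vertex. Because none of the facets in $\hat Y\cup\{B\}$ is touched by the perturbation, $\ell$ is unchanged and still contains both $y_b$ and $w$.

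To close the argument I would parameterize $\ell$ by $p(t)=y_b+t(w-y_b)$, so the last coordinate along $\ell$ is $\lambda(1-t)$ and the equation of $\tilde G$ reduces to $\epsilon\lambda(1-t)$. This is strictly negative for $t<1$ and vanishes only at $t=1$, so the open segment from $y_b$ to $w$ stays strictly inside $\tilde G$ and $\ell$ meets $\tilde G$ only at $w$; past $y_b$ we exit through the facets in $Y\setminus\hat Y$ that originally terminated the edge; and no other facet has been altered. Hence no new cut is introduced in the relative interior of the segment, and $[y_b,w]$ remains an edge of $\PWdg_{F,G}P$. The principal obstacle is exactly this last step, ruling out a new vertex in the interior of the segment; the nonsimple-edge hypothesis is what makes the argument go through, since it lets us describe $\ell$ without reference to $G$, so that perturbing $G$ can only move the intersection of $\tilde G$ with $\ell$, and the computation pins that intersection down at the endpoint $w$.
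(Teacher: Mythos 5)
Your computation along the base is right as far as it goes: $h_{\tilde{G}}^T w = 0$, $h_{\tilde{G}}^T y_b = \epsilon\lambda < 0$, and the facets $\hat{Y}$, $B$, and $Y\setminus\hat{Y}$ that carry and terminate the segment are untouched, so the base copy $[y_b,w]$ does persist. But that is not the edge the lemma is really about. The paper's proof passes through the nonsimple triangular $2$-face with vertices $w$, $y^t$, $y_b$ into which the wedge turns $[w,y]$, observes that its plane is still supported by the unperturbed facets $\hat{Y}$, and then notes that on this plane $h_{\tilde{G}}^T$ reduces to $\epsilon$ times the last coordinate, so $\tilde{G}$ cuts the face in the height-zero line, producing the edge $[w,y_0]$, where $y_0$ is the new vertex in which $\tilde{G}$ meets the vertical edge $[y_b,y^t]$. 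That edge, lying in $\tilde{G}$ and ending at $y_0$, is what ``the edge in $G$ remains'' means here, and it is what the second observation needs: the tight images of short paths are taken from $x$ to $y_0$, the distinguished vertex of the perturbed wedge, while $y_b$ is not incident to $\tilde{G}$ at all (your own computation shows this), so a path ending $\cdots \to w \to y_b$ stops one vertical edge short of the vertex that matters. So your argument establishes a true neighboring fact but not the paper's conclusion; the repair is cheap, since the supporting plane you already control is cut by $\tilde{G}$ exactly in the line of last coordinate $0$, which yields $[w,y_0]$ directly.

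Separately, the inference ``$[y_b,w]$ lies on at least $d+1$ facets, hence $\ell=\bigcap(\hat{Y}\cup\set{B})$ is already one-dimensional without $G$'' is not a valid deduction: dropping one hyperplane from $d+1$ hyperplanes through a common line can leave an intersection of dimension two or more if the remaining normals are dependent. What is actually needed is that $\hat{Y}$ by itself pins down the space of the original edge (equivalently, that the vertical facets $\hat{Y}$ pin down the plane of the triangular face). The paper asserts this with the same brevity (``the space of the triangular face is still defined by $\hat{Y}$''), so this is a shared looseness rather than a defect peculiar to your write-up; but it should be stated as the structural assumption it is, not presented as a consequence of counting facets.
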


\begin{figure}[tb] 
\centering
\includegraphics[width=4.5in]{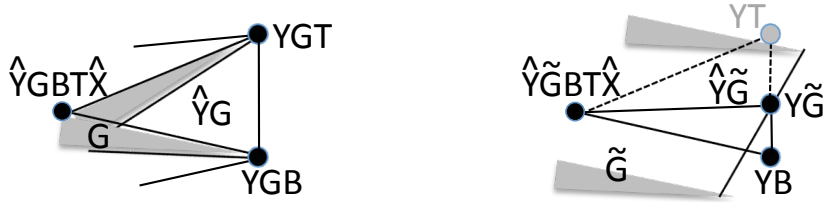}
\caption{\label{NbrVtxFig} Consider the action of the perturbed wedge on the
facet-coincidences at $y$ and $w$.  Under the wedge, $YG$ becomes an edge and
$\hat{Y}G$ becomes a $2$-face.  The vertex $y$ has two natural images, $y^t$ which is 
incident to the facets $YGT$, and $y_b$ which is incident to $YGB$.  The facet $F$ is 
replaced by two facets, the top $T$ and the base $B$.
Now we perturb the facet $G$, introducing a small positive value in the last coordinate of its outward
normal.
Since $y$ was nonsimple, the vertex $YB$ remains, but the vertex $YT$ is truncated away.
Instead, $\tilde{G}$ now intersects the vertical edge $Y$ in the plane 
$\set{1}\times \R^d \times \set{0}$.  The $2$-face $\hat{Y}G$  was nonsimple, and so the
$2$-face remains with its space supported by $\hat{Y}$, and $\tilde{G}$ intersects
it in an edge $[w,y_0]$.
}
\end{figure}

\begin{proof}
As a nonsimple edge, the 1-dimensional space of $[w,y]$ is defined by 
the coincidence of $G$ and at least $d-1$ other facets $\hat{Y}$.  
See Figure~\ref{FacetFig}. 
These facets $\hat{Y}G$
are incident to both $w$ and $y$.  The boundary of the edge at $w$ is established by
$F$ and possibly more facets $\hat{X}$, none of which can be incident to $y$.  The boundary
of the edge at $y$ is established by the facets $Y\setminus \hat{Y}$.

Under the wedge $\Wdg_F P$, the image of the edge is a nonsimple $2$-dimensional face,
the triangle with vertices $w$, $y^t$, and $y_b$.

Now, under the perturbation, the space of the triangular face is still defined by $\hat{Y}$.
$\tilde{G}$ intersects this face in the plane with last coordinate $0$, creating an edge from
$w$ to $y_0$.
\end{proof}


In the dual setting, the wedge over the facet $F(u)$ corresponds to a
two-point suspension $S_u(P^*)$ over the vertex $u$.  See Figure~\ref{SantosWedgeFig}.
The perturbation of the facet $G=F(v)$ corresponds to a vertical perturbation of the vertex $v$.
The observation in the previous lemma is that if the facet $Y$ is adjacent to a facet $W$, such
that $u,v \in W$ and that the ridge $Y \cap W$ is not simplicial, then after the two-point suspension
over $u$ and the perturbation of $v$, the new $Y$ is still adjacent to $W$ across the ridge, with
$v$ removed from the ridge.


Since the spindles used to seed the construction of the counterexample to the Hirsch conjecture
are all-but-simple, the nonsimple edge does not occur on short paths between $x$ and $y$.
Although the natural images of $x$ and $y$ under this construction are not simple vertices, 
and although they are connected by nonsimple edges, the edges from $y_0$ run to natural
images of $y$ on the bases of various wedges, which should not be reused as the foot
of the wedge.  Thus the condition on $G \cap F$ does not occur for short paths.

\section{Summary}
Santos' construction of the first known counterexample to the Hirsch conjecture, for bounded
polytopes, follows the strategy of first finding a counterexample to the nonrevisiting conjecture.
Santos constructs a $5$-dimensional all-but-simple spindle $(P,x,y)$ of length $6$, which is
a counterexample to the nonrevisiting conjecture.

For simple polytopes, if we had a counterexample to the nonrevisiting conjecture, we would
produce the corresponding counterexample to the Hirsch conjecture through repeated wedging,
over all the facets not incident to $x$ or $y$.  However, Santos $5$-dimensional spindle is not simple.  Every facet is incident to either $x$ or $y$,
so we need an alternate method to produce the corresponding counterexample to the Hirsch
conjecture.  The perturbed wedge accomplishes this.


\bibliographystyle{alpha}

\bibliography{../Biblio/poly}

\begin{thebibliography}{KW67}

\bibitem[Hol03]{Hnr}
F.B. Holt.
\newblock Maximal nonrevisiting paths in simple polytopes.
\newblock {\em Discrete Mathematics}, (1-3):105--128, 2003.

\bibitem[Hol04]{Hblend}
F.B. Holt.
\newblock Blending simple polytopes at faces.
\newblock {\em Discrete Mathematics}, (1-3):141--150, 2004.

\bibitem[KW67]{KW}
V.~Klee and D.W. Walkup.
\newblock The $d$-step conjecture for polyhedra of dimension $d<6$.
\newblock {\em Acta Mathematica}, 117:53--78, 1967.

\bibitem[San10]{Paco}
F.~Santos.
\newblock A counterexample to the {H}irsch {C}onjecture.
\newblock {\em arXiv}, 1006.2814v1:1--27, 2010.

\end{thebibliography}

\end{document}